\newcommand{\negi}[1]{n_{\_}(#1)}
\DeclareMathOperator{\trace}{trace}
\newcommand{\eq}[1]{\begin{equation}\label{#1}}
\newcommand{\en}{\end{equation}}
\newcommand{\evcomment}[1]{{\color{blue}~\textsf{[EV: #1]}}}
\begin{document}

\title*{Preconditioned iterative methods for eigenvalue counts}
\author{Eugene Vecharynski and Chao Yang}
\institute{Eugene Vecharynski \at Computational Research Division, Lawrence Berkeley National Laboratory, Berkeley, CA 94720, \email{evecharynski@lbl.gov}
\and Chao Yang \at Computational Research Division, Lawrence Berkeley National Laboratory, Berkeley, CA 94720 \email{cyang@lbl.gov}}
%
%
\maketitle


\abstract{
We describe preconditioned iterative methods for estimating the 
number of eigenvalues of a Hermitian matrix within a given interval. 
Such estimation is useful in a number of applications.
In particular, it can be used to develop an efficient spectrum-slicing 
strategy to compute
many eigenpairs of a Hermitian matrix.  Our method is based on
the Lanczos- and Arnoldi-type of iterations. We show that with
a properly defined preconditioner, only a few iterations may be 
needed to obtain a good estimate of the number of eigenvalues within
a prescribed interval. We also demonstrate that the number of iterations 
required by the proposed preconditioned schemes is independent of the size 
and condition number of the matrix.  The efficiency of the methods is 
illustrated on several problems arising from density functional theory 
based electronic structure calculations.
}

\section{Introduction}

The problem of estimating the number of eigenvalues of a large and sparse
Hermitian matrix $A$ within a given 
interval~$[\xi, \ \eta]$ has recently drawn a lot of attention, 
e.g.,~\cite{DiNapoli.Polizzi.Saad:2015-TR, Lin.Yang.Saad:2015-TR}.
One particular use of this estimation is in the implementation of
a ``spectrum slicing'' technique for computing many eigenpairs of a Hermitian matrix~\cite{Aktulga.Lin.Haine.Ng.Yang:2014, Li.Xi.Ve.Yang.Saad:2015-TR}. 
Approximate eigenvalue counts are used to 
determine how to divide the desired spectrum into several subintervals that
can be examined in parallel.  In large-scale data analytics, efficient
means of obtaining approximate eigenvalue counts is required for estimating the generalized rank of a given 
matrix; see, e.g.,~\cite{Zhang.Wainwright.Jordan:2015-TR}.
   
A traditional approach for counting the number of eigenvalues of $A$ in~$[\xi, \ \eta]$ is based on the Sylevester's
law of inertia~\cite{Parlett:98}. The inertia of the shifted matrices 
$A - \xi I$ and $A - \eta I$ are obtained by performing $LDL^T$ factorizations
of these matrices~\cite{Aktulga.Lin.Haine.Ng.Yang:2014}.
This approach, however, is impractical if $A$ is extremely large or not 
given explicitly. 

Several techniques that avoid factoring $A$ have recently been described in~\cite{DiNapoli.Polizzi.Saad:2015-TR, Lin.Yang.Saad:2015-TR}.
These methods only require multiplying $A$ with a number of vectors.
In~\cite{Lin.Yang.Saad:2015-TR}, a survey that describes several approaches 
to approximating the so-called density of states (DOS), which 
measures the probability of finding eigenvalues near a given point on the real line is presented. The DOS
approximation can then be used to obtain an estimate of the number of eigenvalues in~$[\xi, \ \eta]$. The potential
drawback of a DOS estimation based approach is that, instead of directly targeting the specific interval~$[\xi, \ \eta]$, it always tries to approximate the eigenvalue distribution on the entire spectrum first.

Conceptually, the approaches in~\cite{DiNapoli.Polizzi.Saad:2015-TR, Lin.Yang.Saad:2015-TR} are based on constructing a least-squares polynomial approximation 
of a spectral filter.  Such approximations, however, often yield polynomials 
of a very high degree if $A$ is ill-conditioned or the eigenvalues to 
be filtered are tightly clustered. These are common issues in practical 
large-scale computations.  In particular, matrices originating from the
discretization of partial differential operators tend to become more 
ill-conditioned as the mesh is refined. As a result,
the polynomial methods of~\cite{DiNapoli.Polizzi.Saad:2015-TR, Lin.Yang.Saad:2015-TR} can become prohibitively expensive.  The overall cost of the computation
becomes even higher if the cost of multiplying $A$ with a vector is 
relatively high.

In this work we explore the possibility of using preconditioned iterative
methods to reduce the cost of estimating the number of eigenvalues within an 
interval.  By applying the Lanczos or Arnoldi iteration to preconditioned
matrices with properly constructed Hermitian positive definite (HPD) 
preconditioners, we can significantly reduce the number of matrix-vector 
multiplications required to obtain accurate eigenvalue counts. Furthermore,
when a good preconditioner is available, we can keep the number of 
matrix-vector multiplications (roughly) constant even as the problem size 
and conditioning of $A$ increase. 
The methods we present in this paper do not require the lower and upper 
bounds of the spectrum of $A$ to be estimated a priori. This feature compares
favorably with the methods of~\cite{DiNapoli.Polizzi.Saad:2015-TR, Lin.Yang.Saad:2015-TR} since obtaining such bounds can by itself be a challenging task. 

    
This paper is organized as following. Section~\ref{sec:idea} outlines the main idea, followed by derivation of the preconditioned 
Lanczos-type estimator based on Gauss quadrature in Section~\ref{sec:lan}. The preconditioned Arnoldi-type algorithm is presented 
in Section~\ref{sec:arnoldi}. In Section~\ref{sec:poly}, we discuss the proposed methods from the polynomial perspective. 
The performance of the 
introduced schemes depends to a large extent on the 
quality of the HPD preconditioner associated with the matrix $A - \tau I$. 
While the development of such a preconditioner 
is outside the scope of this paper, we point to several available options in Section~\ref{sec:prec}. Several
numerical experiments are reported in Section~\ref{sec:num}. 

\section{Basic idea}\label{sec:idea}

To simplify our presentation, let us assume that the endpoints $\xi$ and $\eta$ are different from any eigenvalue~of~$A$. 
Then the number of eigenvalues $c(\xi,\eta)$ of $A$ in~$[\xi, \ \eta]$
is given by the difference $c(\xi,\eta) = n_{\_}(A - \eta I) - n_{\_}(A - \xi I)$,
where $n_{\_}(A - \tau I)$ denotes the negative inertia (i.e., the number of negative eigenvalues) of $A - \tau I$. 
Hence, in order to approximate $c(\xi,\eta)$, it is sufficient to estimate $n_{\_}(A - \tau I)$ for a given real number $\tau$.   

The problem of estimating $n_{\_}(A - \tau I)$ can be reformulated as that of approximating the 
trace of a matrix step function. Namely, let
\eq{eq:h}
h(x) = 
\left\{
\begin{array}{cl}
 1, & x < 0 \ ; \\
 0, & \mbox{otherwise} \ . 
\end{array} \right.
\en
Then 
\eq{eq:negi}
n_{\_}(A - \tau I) = \trace \left\{ h(A - \tau I) \right\}. 
\en   

Now let us assume that $T$ is an HPD 
preconditioner for the shifted matrix $A - \tau I$ in the 
sense that the  spectrum of $TA$ is clustered around a few
distinct points on the real line. Specific options for constructing such preconditioners will be discussed in Section~\ref{sec:prec}.

If $T$ is available in a factorized form $T= M^*M$, 
estimating $\negi{A-\tau I}$ is equivalent to estimating 
$\negi{M(A-\tau I)M^\ast}$, i.e., transforming $A-\tau I$ to
$C = M(A-\tau I)M^\ast$ preserves the inertia.  Hence, we have
\eq{eq:negi_T}
n_{\_}(A - \tau I) = \trace \left\{ h(C) \right\}.
\en 

If $T=MM^\ast$ is chosen in such a way that its spectrum has
a favorable distribution, i.e., the eigenvalues of $C$ is 
clustered in a few locations, then estimating 
$\trace \left\{ h(C) \right\}$ can be considerably easier than
estimating $\trace \left\{ h(A - \tau I) \right\}$

If the multiplication of $C$ with a vector can be performed efficiently,
then the trace of $C$ can be estimated as
\eq{eq:trace}
\trace \left\{ C \right\} \approx \frac{1}{m}\sum_{j=1}^m v_j^* C v_j,
\en 
where the entries of each vector $v_j$ are i.i.d. random variables with zero mean and unit variance; 
see~\cite{Hutchinson:89, Avron.Toledo:2011}.  It follows that
\eq{eq:traceT}
n_{\_}(A - \tau I) = \trace \left\{ h(C) \right\} \approx \frac{1}{m}\sum_{j=1}^m v_j^* h(C) v_j,
\en 
for a sufficiently large sample size $m$.

The variance of the stochastic trace estimator is known to depend on the magnitude of off-diagonal entries
of the considered matrix, which is $h(C)$ in~\eqref{eq:traceT}. 
Clearly, different choices of the preconditioned 
operator $C$ yield different matrices $h(C)$, and hence lead to different convergence rates of
the estimator~\eqref{eq:traceT}.     

\section{Preconditioned Lanczos}\label{sec:lan}

If $A$ is large, then the exact evaluation of $h(C)$ in~\eqref{eq:traceT} 
can be prohibitively expensive, because it requires a full eigendecomposition of the preconditioned matrix. 
A more practical approach in this situation would be to (approximately) compute 
$v^* h(C) v$ for a number of randomly sampled vectors $v$ without explicitly 
evaluating the matrix function. 

\subsection{The Gauss quadrature rule}
Let us assume that $T = M^*M$ is available in the factorized form and let $C = M(A - \tau I)M^*$
in~\eqref{eq:traceT}. We also assume that the Hermitian matrix $C$ 
has $p \leq n$ distinct eigenvalues $\mu_1 < \mu_2 < \ldots < \mu_p$. 

Consider the orthogonal expansion of $v$ in terms of the 
eigenvectors of $C$, i.e., $v = \sum_{i=1}^p \alpha_i u_i$, 
where $u_i$ is an normalized eigenvector associated with the 
eigenvalue $\mu_i$, and $\alpha_i = u_i^* v$. 
It is then easy to verify that
\eq{eq:wsum}
v^* h(C) v =  \sum_{i=1}^p \alpha_i^2 h(\mu_i)  \equiv \sum_{i=1}^{p\_} \alpha_i^2, \quad \alpha_i^2 = |u_i^* v|^2,
\en
where $p\_$ denotes the number of negative eigenvalues. 
The right-hand side in~\eqref{eq:wsum} can be viewed as a Stieltjes integral of the step function $h$
with respect to the measure defined by the piecewise constant function 
\eq{eq:alpha}
\alpha_{C, v}(x) = \left\{ 
\begin{array}{ll}
0, & \mbox{if} \; x < \mu_1, \\
\sum_{j=1}^i \alpha_j^2, & \mbox{if}  \; \mu_i \leq x < \mu_{i+1}, \\ 
\sum_{j=1}^i \alpha_j^2, & \mbox{if}  \; \mu_p \leq x.
\end{array}
\right.
\en
Therefore, using~\eqref{eq:alpha}, we can write~\eqref{eq:wsum} as
\eq{eq:int}
v^* h(C) v =  \int h(x) d \alpha_{C, v} (x) \equiv 
\int_{\mu_1}^0 d \alpha_{C, v} (x).
\en 

Computing the above integral directly is generally infeasible because the 
measure~\eqref{eq:alpha} is defined in terms of the unknown eigenvalues of $C$. 
Nevertheless, the right-hand side of~\eqref{eq:int} can be approximated by
using the Gauss quadrature rule~\cite{Golub.Meurant:10}, so that
\eq{eq:quad}
v^* h(C) v \approx \sum_{i=1}^k w_i h(\theta_i) \equiv \sum_{i=1}^{k\_} w_i,
\en
where the $k$ nodes $\theta_1 \leq \theta_2 \leq \ldots \leq \theta_k$ and weights $w_1, w_2, \ldots, w_k$ of the quadrature 
are determined from $k$ steps of the Lanczos procedure (see Algorithm~\ref{alg:lan}) applied to the preconditioned matrix $C$ with the starting vector $v$. 
In~\eqref{eq:quad}, $k\_$ denotes the number of negative nodes $\theta_i$. 
\begin{algorithm}[!htbp]
\begin{smaller}
\begin{center}
  \begin{minipage}{5in}
\begin{tabular}{p{0.5in}p{4.5in}}
{\bf Input}:  &  \begin{minipage}[t]{4.0in}
                  Matrix $A - \tau I$, $T = M^*M$, starting vector $v$, and number of steps $k$.
                  \end{minipage} \\
{\bf Output}:  &  \begin{minipage}[t]{4.0in}
                  Tridiagonal matrix $J_{k+1,k}$ and the Lanczos basis $Q_{k+1} = [q_1, q_2, \ldots, q_{k+1}]$.  
                  \end{minipage}
\end{tabular}
\begin{algorithmic}[1]
\STATE $q_1 \gets v/\|v\|$; $q_0 \gets 0$; $\beta_1 \gets 0$; $Q_1 \gets q_1$; 
\FOR {$i = 1 \rightarrow k$}
     \STATE $w \gets M(A - \tau I)M^* q_i - \beta_i q_{i-1}$; 
     \STATE $\alpha_i \gets q_i^* w$; $w \gets w - \alpha_i q_i$; 
     \STATE Reorthogonalize $w \gets w - Q_i (Q_i^* w)$;
     \STATE $\beta_{i+1} \gets \|w\|$; $q_{i+1} \gets w/\beta_{i+1}$; $Q_{i+1} \gets [Q_i, \ q_{i+1}]$;  
\ENDFOR
\end{algorithmic}
\end{minipage}
\end{center}
\end{smaller}
  \caption{The Lanczos procedure for $M(A - \tau I)M^*$}
  \label{alg:lan}
\end{algorithm}

Specifically, given $q_1 = v/\|v\|$, running $k$ steps of the Lanczos procedure in Algorithm~\ref{alg:lan} yields the relation  
\eq{eq:lan}
C Q_k = Q_{k+1} J_{k+1,k}, \quad Q_{k+1}^*Q_{k+1} = I,
\en
where $J_{k+1,k}$ is the tridiagonal matrix
\begin{equation}
\label{eq:Tk}
J_{k+1,k}  =  \left [ \begin{array}{cccc}
\alpha_1  & \beta_2  & &  \\
\beta_2   & \alpha_2 & \ddots &  \\
          & \ddots   & \ddots & \beta_{k}  \\
          &                        & \beta_{k}   & \alpha_k \\
          &                        &             & \beta_{k+1} \\
\end{array} \right ] \in \mathbf{R}^{(k+1) \times k}.
\end{equation}
The eigenvalues of the leading $k\times k$ submatrix of $J_{k+1,k}$, 
denoted by $J_k$, are ordered so that $\theta_1 \leq \theta_2 \leq \ldots \leq \theta_{k\_} < 0 \leq  \theta_{k\_+1} \leq \ldots \leq \theta_{k}$. 
Then the Gauss quadrature rule on the right-hand side of~\eqref{eq:quad} is defined by 
eigenvalues and eigenvectors of $J_k$, i.e.,
\eq{eq:gauss_quad}
v^* h(C) v \approx \|v\|^2 e_1^* h(J_k) e_1 = \sum_{i=1}^{k} w_i h(\theta_i) \equiv \sum_{i=1}^{k\_} w_i, \quad w_i = \|v\|^2 |z_i(1)|^2,
\en
where $z_i$ is the eigenvector of $J_k$ associated with the eigenvalue $\theta_i$, 
$z_i(1)$ denotes its first component~\cite{Golub.Meurant:10}, and
$k_{\_}$ denotes the number of negative Ritz values.


If the preconditioner $T=MM^\ast$ is chosen in such a way that the spectrum of 
$C = M(A-\tau I)M^*$ is concentrated within small intervals 
$[a, b] \subset (-\infty,0)$ and $[c,d] \subset (0,\infty)$,
then, by~\eqref{eq:alpha}, the measure $\alpha_{M(A - \tau I)M^*, v}$ will have jumps 
inside $[a, b]$ and $[c, d]$, and will be constant elsewhere. Hence, the integral in~\eqref{eq:int} will be determined only by integration
over $[a, b]$ because $h$ vanishes in $[c,d]$.
Therefore, in order for quadrature rule~\eqref{eq:quad} to be a good approximation
to \eqref{eq:int}, its nodes should be chosen inside $[a, b]$. 

In the extreme case in which clustered eigenvalues of $C$ coalesce into 
a few eigenvalues of higher multiplicities, the number of Lanczos steps 
required to obtain an accurate approximation in~\eqref{eq:gauss_quad} is 
expected to be very small.

\begin{proposition}\label{prop:mult}
Let the preconditioned matrix $C = M(A - \tau I)M^*$ have $p$ distinct eigenvalues. Then the Gauss quadrature~\eqref{eq:gauss_quad}
will be exact with at most $k = p$ nodes.
\end{proposition}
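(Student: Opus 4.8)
The plan is to prove Proposition~\ref{prop:mult} by showing that the Lanczos recursion of Algorithm~\ref{alg:lan}, applied to $C$ with starting vector $v$, must terminate after at most $p$ steps, and that at termination the nodes $\theta_i$ and weights $w_i$ in~\eqref{eq:gauss_quad} reproduce the discrete measure $\alpha_{C,v}$ of~\eqref{eq:alpha} exactly, so that~\eqref{eq:gauss_quad} becomes an identity. I would organize the argument in three steps: (i) exhibit a $C$-invariant subspace of dimension at most $p$ that contains $v$, and deduce that $\beta_{k+1}=0$ for some $k\le p$; (ii) at that breakdown step, identify the eigenvalues of $J_k$ with a subset of $\{\mu_1,\dots,\mu_p\}$ and the weights $w_i$ with the corresponding masses $\alpha_j^2$; (iii) substitute into~\eqref{eq:gauss_quad} and compare with~\eqref{eq:wsum}.

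For step (i), I would use the expansion $v=\sum_{i=1}^p\alpha_iu_i$ with $Cu_i=\mu_iu_i$ from the paragraph preceding~\eqref{eq:wsum}, and set $\mathcal U=\Span\{u_i:\alpha_i\neq0\}$, a subspace of dimension $p'\le p$. Since $C^jv=\sum_i\mu_i^j\alpha_iu_i\in\mathcal U$ for every $j\ge0$, every Krylov subspace $\mathcal K_k(C,v)=\Span\{v,Cv,\dots,C^{k-1}v\}$ lies in $\mathcal U$ and hence has dimension at most $p'$. Because the Lanczos vectors $q_1,\dots,q_k$ form an orthonormal basis of $\mathcal K_k(C,v)$ (the reorthogonalization step in Algorithm~\ref{alg:lan} does not change this span), the recursion cannot keep producing new orthonormal vectors past dimension $p'$, so $\beta_{k+1}=0$ at some step $k\le p'\le p$; I would fix the first such $k$.

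For step (ii), at this $k$ the relation~\eqref{eq:lan} collapses to $CQ_k=Q_kJ_k$ with $Q_k^*Q_k=I$, so $\mathrm{range}(Q_k)=\mathcal K_k(C,v)=\mathcal U$ is $C$-invariant and $J_k=Q_k^*CQ_k$ represents $C|_{\mathcal U}$; its eigenvalues are therefore exactly $\{\mu_i:\alpha_i\neq0\}$. Invariance gives $g(C)Q_k=Q_kg(J_k)$ for any function $g$, and taking $g=h$ together with $Q_ke_1=v/\|v\|$ yields $v^*h(C)v=\|v\|^2e_1^*h(J_k)e_1$, which is precisely the middle term of~\eqref{eq:gauss_quad}. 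Diagonalizing $J_k$ then produces $\|v\|^2e_1^*h(J_k)e_1=\sum_iw_ih(\theta_i)$ with $w_i=\|v\|^2|z_i(1)|^2$, and matching each $\theta_i$ to the corresponding $\mu_j$ (summing the $|z_i(1)|^2$ over any eigenvalue of $J_k$ of multiplicity greater than one, inherited from a multiple eigenvalue of $C$) shows that $w_i$ equals the jump $\alpha_j^2$ of $\alpha_{C,v}$ at $\mu_j$. Since $k\le p$, this establishes that~\eqref{eq:gauss_quad} is exact with at most $p$ nodes. For the identification of nodes and weights one can alternatively invoke the standard correspondence between Gauss quadrature, orthogonal polynomials, and the Lanczos tridiagonalization~\cite{Golub.Meurant:10} in place of the direct invariant-subspace computation.

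The part I expect to need the most care is the bookkeeping around early termination: when some $\alpha_i$ vanish, the process stops at $k=p'<p$ nodes rather than exactly $p$, so ``at most $p$ nodes'' must be read as permitting a Lanczos breakdown before step $p$, with the relevant Gauss rule being the one defined at that step. It is also worth stressing that $h$ is not a polynomial, so the familiar ``exact on polynomials of degree $\le 2k-1$'' property of Gauss quadrature is not what drives the result; rather, once the nodes exhaust the (at most $p$) points of support of the piecewise-constant measure $\alpha_{C,v}$, the quadrature~\eqref{eq:gauss_quad} reproduces that discrete measure exactly and hence integrates \emph{any} integrand, $h$ included, without error.
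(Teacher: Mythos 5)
Your proof is correct and follows essentially the same route as the paper's: expand $v$ in the eigenvectors of $C$, note that the Lanczos process terminates after at most $p$ steps, identify the Ritz values with the eigenvalues $\mu_i$ and the weights $w_i=\|v\|^2|z_i(1)|^2$ with the masses $|v^*u_i|^2=\alpha_i^2$, and compare with~\eqref{eq:wsum}. The only difference is that you handle the early-breakdown case (some $\alpha_i=0$) explicitly and justify the node/weight identification through the invariant-subspace relation $g(C)Q_k=Q_k\,g(J_k)$, details the paper's shorter proof leaves implicit.
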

\begin{proof}
Let $v = \sum_{i=1}^p \alpha_i u_i$, where $u_i$ is an eigenvector of $C$ associated with the eigenvalue $\mu_i$. 
Then $p$ steps of Lanczos process with $v$ as a starting vector produce a tridiagonal matrix $J_p$ 
and an orthonormal basis $Q_p$, such that the first column of $Q_p$ is $\hat v = v/\|v\|$.
The eigenvalues $\theta_i$ of $J_p$ are exactly the $p$ distinct eigenvalues of~$C$. 
The eigenvectors $z_i$ of $J_p$ are related to those of $C$ as $u_i = Q_p z_i$. Thus, we have 
$w_i = \|v\|^2 |z_i(1)|^2 = \|v\|^2 |\hat v^* u_i|^2 = |v^* u_i|^2$,
and, by comparing with~\eqref{eq:wsum}, we see that the quadrature~\eqref{eq:gauss_quad} 
gives the exact value of $v^* h(M(A - \tau I)M^*) v$.
\end{proof}   
Proposition~\ref{prop:mult} implies that in the case of an ideal preconditioner, where $M(A - \tau I)M^*$ has 
two distinct eigenvalues, the Gauss quadrature rule~\eqref{eq:gauss_quad} is guaranteed to be exact after at most two Lanczos steps. 

\subsection{The algorithm}
Let $J^{(j)}_k$ denote the $k$-by-$k$ tridiagonal matrix resulting from the $k$-step Lanczos procedure applied to 
$C=M(A-\tau I)M^*$ with a random starting vector $v_j$.
Assume that $k_j$ is the number of its negative eigenvalues. Then, by~\eqref{eq:traceT} and~\eqref{eq:gauss_quad}, 
the quantity $n\_(A - \tau I)$ can be approximated from the estimator 
\eq{eq:rand_prec_quad}
L_{\tau}(k,m) = \frac{1}{m} \sum_{j=1}^m  \sum_{i=1}^{k_{j}}   w^{(j)}_i, \quad w^{(j)}_i = \|v_j\|^2|z_i^{(j)}(1)|^2, \quad v_j 
\in \mathcal{N}(0,I),
\en
where $z_i^{(j)}(1)$ denotes the first components of a normalized eigenvector $z_i^{(j)}$ of $J_{k}^{(j)}$
associated with the negative eigenvalues.
It is expected that, for a sufficiently large $m$,  $L_\tau(k,m) \approx n_{\_}(A - \tau I)$.
The expression~\eqref{eq:rand_prec_quad} is what Algorithm~\ref{alg:planczos} uses
to estimate the number of eigenvalues of $A$ that are to the left of $\tau$.
\begin{algorithm}[!htbp]
\begin{smaller}
\begin{center}
  \begin{minipage}{5in}
\begin{tabular}{p{0.5in}p{4.5in}}
{\bf Input}:  &  \begin{minipage}[t]{4.0in}
                  Matrix $A$, shift $\tau$, HPD preconditioner $T = M^*M$ for $A - \tau I$, number of steps $k$, and parameter $m$.
                  \end{minipage} \\
{\bf Output}:  &  \begin{minipage}[t]{4.0in}
                  approximate number $C_{\tau}$ of eigenvalues of $A$ that are less than $\tau$;
                  \end{minipage}
\end{tabular}
\begin{algorithmic}[1]
\STATE $L_{\tau} \gets 0$.
\FOR {$j = 1 \rightarrow m$}
   \STATE Generate $v \sim \mathcal{N}(0,I)$.
   \STATE Run $k$ steps of Lanczos process in Algorithm~\ref{alg:lan} with 
   the starting vector $v$ to obtain tridiagonal matrix $J_k$.
   \STATE Find the eigendecomposition $(\Theta,Z)$ of $J_{k}$. 
   Let $z_1, \ldots, z_{k\_}$ be unit eigenvectors associated with negative eigenvalues. 
   \STATE Set $L_{\tau} \gets L_{\tau} + \|v\|^2 \sum_{i = 1}^{k\_} w_i$, where $w_i = |z_i(1)|^2$.
\ENDFOR
\STATE Return $L_{\tau} \gets \left[ L_{\tau}/m \right ]$.
\end{algorithmic}
\end{minipage}
\end{center}
\end{smaller}
  \caption{The preconditioned Lanczos-type estimator for $n\_(A - \tau I)$}
  \label{alg:planczos}
\end{algorithm}

In order to estimate the number of eigenvalues in a given interval $[\xi,\eta]$, Algorithm~\ref{alg:planczos} should be applied twice with 
$\tau = \xi$ and $\tau = \eta$. The difference between the estimated 
$n\_(A-\xi I)$ and $n\_(A-\eta I)$ yields the desired count.
The two runs of Algorithm~\ref{alg:planczos} generally require two different HPD preconditioners, one for $A - \xi I$ and the other for $A - \eta I$.
In some cases, however, it can be possible to come up with a single preconditioner that works well for both runs.

The cost of Algorithm~\ref{alg:planczos} is dominated by computational work required to perform the preconditioned matrix-vector multiplication of $M(A-\tau I)M^* v$ 
at each iteration of the Lanczos procedure.
The eigenvalue decomposition of the tridiagonal matrix 
$J_{k}$, as well as reorthogonalization of the Lanczos basis in step 6 of Algorithm~\ref{alg:lan}, 
is negligibly small for small values of $k$, which can be ensured by a sufficiently
high quality preconditioner. Note that, in exact arithmetic, the Lanczos basis $Q_i$ should be 
orthonormal~\cite{Parlett:98}. However, in practice, the orthogonality may be lost; therefore, we reorthogonalize 
$Q_i$ at every iteration of Algorithm~\ref{alg:lan}.




\subsection{Bias of the estimator}
A relation between the Gauss quadrature~\eqref{eq:gauss_quad} and matrix functional $v^* h(C) v$ can be expressed as 
\[
\|v\|^2 \sum_{i=1}^{k\_} w_i = v^* h(C) v + \epsilon_k,
\]
where $\epsilon_k$ is the error of the quadrature rule. Thus,~\eqref{eq:rand_prec_quad} 
can be written as
\eq{eq:bias}
L_{\tau}(k,m) = \frac{1}{m} \sum_{j=1}^{m} v_j^* h(C) v_j + \frac{1}{m} \sum_{j=1}^m \epsilon_k^{(j)},
\en
where $\epsilon_k^{(j)}$ denotes the error of the quadrature rule for 
$v_j^* h(C) v_j$. As $m$
increases, the first term in the right-hand side of~\eqref{eq:bias} converges to  
$\trace \left\{ h(C) \right\}$ = $n_{\_}(A - \tau I)$. Thus, $L_{\tau}(k,m)$ is a biased estimate of 
$n_{\_}(A - \tau I)$, where the bias is determined by the (average) error of the quadrature rule, given by the second term
in the right-hand side of~\eqref{eq:bias}. In other words, the accuracy of $L_{\tau}(k,m)$ generally depends on how well
the Gauss quadrature captures the value of the matrix functional $v^* h (M (A - \tau I) M^*) v$.  

Bounds on the quadrature error for a matrix functional $v^* f(C) v$, where $f$ is a sufficiently smooth function and $C$ is a 
Hermitian matrix, are well known. In particular, the result of~\cite{Calvetti.Golub.Reichel:99} gives the bound 
\eq{eq:qbound}
|\epsilon_k| \leq \frac{N_k}{2k!} \beta_{k+1}^{2} \beta_{k}^{2} \ldots \beta_2^{2},
\en       
where the constant $N_k$ is such that $ |f^{(2k)}(x)| \leq N_k$ for $x$ in the interval containing spectrum of $C$, 
and $\beta_j$ are the off-diagonal entries of~\eqref{eq:Tk}. 

Function $h(x)$ in~\eqref{eq:h} is discontinuous. Therefore, bound~\eqref{eq:qbound} does not directly apply 
to measure the quadrature error the functional $v^* h(M(A - \tau I) M^*) v$. 
However, since the rule~\eqref{eq:gauss_quad} depends on the values of $h(x)$ only at the 
Ritz values $\theta_i$ generated by the Lanczos process for $M(A-\tau I)M^*$, it will yield exactly the same result
for any function $\tilde h(x)$, such that $\tilde h(\theta_i) = h(\theta_i)$ for all $\theta_i$. 
If, additionally, $\tilde h(x)$ assumes the same values as $h(x)$ on the spectrum of $M(A - \tau I) M^*$,
then, by~\eqref{eq:wsum}, the functionals $v^* h(M(A - \tau I) M^*) v$ and $v^* \tilde h(M(A - \tau I) M^*) v$
will also be identical. 
Hence, the quadrature errors for $v^* \tilde h(M(A - \tau I) M^*) v$ and $v^* h(M(A - \tau I) M^*) v$ will coincide. 
But then we can choose $\tilde h(x)$ as a $2k$ times continuously differentiable function
and apply~\eqref{eq:qbound} to bound the quadrature error for $v^* \tilde h(M(A - \tau I) M^*) v$. This error will be
exactly the same as that of the quadrature~\eqref{eq:gauss_quad} for $v^* h(M(A - \tau I) M^*) v$, which we are interested in.


In particular, let us assume that the eigenvalues of $M(A-\tau I)M^*$ and Ritz values $\theta_i$ are located in 
intervals $[a,b)$ and $(c,d]$ to the left  and right of origin, respectively. 
Then we can choose $\tilde h(x)$ such that it is constant one on $[a,b)$ and constant zero on $(c,d]$. On the interval $[b,c]$,
which contains zero, we let $\tilde h(x)$ to be a polynomial $p(x)$ of degree $4k+1$, such that $p(b) = 1$, 
$p(c) = 0$, and $p^{(l)}(b) = p^{(l)}(c) = 0$ for $l = 1,\ldots, 2k$. This choice of polynomial will ensure
that the piecewise function $\tilde h(x)$ is $2k$ times continuously differentiable. 
(Note that $p(x)$ can always be be constructed by (Hermite) interpolation with the nodes $b$ and $c$; see, e.g.,~\cite{Powell:81}.)
We then apply~\eqref{eq:qbound} to obtain the bound on the quadrature error for $v^* \tilde h(M(A-\tau I)M^*) v$. 
As discussed above, this yields the estimate of the error $\epsilon_k$ of quadrature rule~\eqref{eq:gauss_quad} 
for functional $v^* h(M(A - \tau I) M^*) v$. Thus, we can conclude that the latter is bounded by~\eqref{eq:qbound}, where
$N_k$ is the maximum of $|p^{(2k)}(x)|$ on the interval $[b,c]$. 

This finding shows that we can expect that~\eqref{eq:gauss_quad} provides a better approximation of $v^* h(M(A - \tau I) M^*) v$ 
when the intervals $[a,b)$ and $(c,d]$, containing eigenvalues of $M(A - \tau I)M^*$ along with the Ritz values produced 
by the Lanczos procedure, are bounded away from zero. In this case, the rate of change of the polynomial $p(x)$ on $[b,c]$ will not be 
too high, resulting in a smaller value of $N_k$ in~\eqref{eq:qbound}. 

Fortunately, a good choice of the preconditioner $T = M^*M$ can
ensure that eigenvalues of $M(A - \tau I)M^*$ are clustered and away from zero. 
In this case, the Ritz values typically converge rapidly to these eigenvalues
after a few Lanczos steps. Thus, with a good preconditioner, 
the Gauss quadrature~\eqref{eq:gauss_quad} can effectively approximate the matrix functional $v^* h(M(A - \tau I) M^*) v$,
yielding small errors $\epsilon_k$ for a relatively small number of quadrature nodes. 
As a result, the 
bias of the estimator $L_{\tau}(k,m)$ in~\eqref{eq:bias} will be small and, as confirmed by numerical experiments in
Section~\ref{sec:num}. 


\subsection{The generalized averaged Gauss quadrature rule}\label{sec:ga}
The Gauss quadrature rule~\eqref{eq:gauss_quad} is exact for all polynomials of degree at most 
$2k-1$; e.g.,~\cite{Golub.Meurant:10}. 

In the recent work of~\cite{Reichel.Spalevic.Tang:15} (and references therein), 
a so-called generalized averaged (GA) Gauss quadrature rules was introduced. 
This quadrature rule make use of the same information returned by a $k$-step Lanczos 
process, but gives an exact integral value for polynomials of degree $2k$.
Hence it is more accurate at essentially the same cost.

When applying the GA Gauss quadrature rule to the matrix functional 
$v^*h(C)v$ in~\eqref{eq:int}, we still use the expression~\eqref{eq:gauss_quad}, 
except that we have $(2k-1)$ nodes
$\theta_1, \theta_2, \ldots,$ $\theta_{2k-1}$ which are the eigenvalues of the matrix 
\begin{equation}
\label{eq:GA_Tk}
\tilde J_{2k-1}  = \mbox{tridiag} \left\{  (\alpha_1, \ldots, \alpha_k, \alpha_{k-1}, \ldots \alpha_1), (\beta_2, \ldots, \beta_k, \beta_{k+1}, \beta_{k-1} \ldots \beta_2) \right\} 
\end{equation}
obtained from $J_{k+1,k}$ in~\eqref{eq:Tk} by extending its tridiagonal part in a ``reverse'' order. The set $(\alpha_i)$ of 
numbers in~\eqref{eq:GA_Tk} gives the diagonal entries of $J_{k+1,k}$, whereas $(\beta_i)$ define the upper and lower diagonals.   
Similarly, the associated weights $w_i$ are determined by squares of the first components of the properly normalized eigenvectors 
$z_i$ of $\tilde J_{2k-1}$ associated with the eigenvalues $\theta_i$;
see ~\cite{Reichel.Spalevic.Tang:15} for more details. 
%
%
Thus, we can expect to increase accuracy of the estimator by a minor modification of Algorithm~\ref{alg:planczos}.
This modification will only affect step 5 of the algorithm, where $J_k$ must be replaced by the extended tridiagonal matrix~\eqref{eq:GA_Tk}.


\section{Preconditioned Arnoldi}\label{sec:arnoldi}

Sometimes, the preconditioner $T$ is not available in a factored form 
$T=MM^\ast$. In this case, it may be necessary to work with 
$T(A - \tau I)$ or $(A - \tau I)T$ directly. 
One possibility is to make use of the fact that $(A-\tau I)T$ is self 
adjoint with respect to an inner product induced by $T$. This property 
allows us to carry out a $T$-inner product Lanczos procedure that produces
\eq{eq:lanT}
(A - \tau I) T X_k = X_{k+1} J_{k+1,k}, \quad X_{k+1}^* T X_{k+1} = I, 
\en
Similarly, we can use a $T^{-1}$-inner product based Lanczos procedure to 
obtain
\eq{eq:lanT1}
T(A - \tau I) Y_k = Y_{k+1} J_{k+1,k}, \quad Y_{k+1}^* T^{-1} Y_{k+1} = I, 
\en
where $Y_k = M^* Q_k$. 
Even though it may appear that we do not need $T$ in a factored form in
either \eqref{eq:lanT} or \eqref{eq:lanT1}, the starting vectors we use
to generate \eqref{eq:lanT} and \eqref{eq:lanT1} are related to $M$. 
In particular, \eqref{eq:lanT} must be generated from $x_1 = M^{-1}q_1$ 
and \eqref{eq:lanT1} must be generated from $y_1 = M^* q_1$, where 
$q_1$ is a random vector with i.i.d entries. 

Another approach is to construct an estimator based on~\eqref{eq:traceT}, where $C = T(A - \tau I)$. 
This will require evaluating the
bilinear form $v^* h(T(A - \tau I))v$, where $h$ is a function of a matrix $T(A-\tau I)$ that has real spectrum 
but is non-Hermitian in standard inner product.    
Similar to the Hermitian case, the matrix functional $v^* h(T(A - \tau I))v$ can be viewed as 
an integral, such that
\eq{eq:int_arnoldi}
v^* h(T(A - \tau I))v = \frac{1}{4\pi^2}\int_{\Gamma} \int_{\Gamma} h(t) v^* (\bar \omega I - (A-\tau I)T)^{-1} (tI - T(A-\tau I))^{-1} v \overline{d \omega} dt,
\en
where $\Gamma$ is a contour that encloses the spectrum of $T(A-\tau I)$ and the bar denotes complex conjugation; see, 
e.g.,~\cite{Hochbruck.Lubich:97}. This integral can be approximated by a quadrature rule based on a few steps 
of the Arnoldi process (Algorithm~\ref{alg:arnoldi}) applied to the preconditioned operator $T(A-\tau I)$ with a starting vector 
$v$~\cite{Calvetti.Kim.Reichel:05, Golub.Meurant:10}. 
\begin{algorithm}[!htbp]
\begin{smaller}
\begin{center}
  \begin{minipage}{5in}
\begin{tabular}{p{0.5in}p{4.5in}}
{\bf Input}:  &  \begin{minipage}[t]{4.0in}
                  Matrix $A - \tau I$, HPD preconditioner $T$, starting vector $v$, and number of steps~$k$.
                  \end{minipage} \\
{\bf Output}:  &  \begin{minipage}[t]{4.0in}
                  Hessenberg matrix $H_{k+1,k}$ and the Arnoldi basis $Q_{k+1} = [q_1, q_2, \ldots, q_{k+1}]$.  
                  \end{minipage}
\end{tabular}
\begin{algorithmic}[1]
\STATE $q_1 \gets v/\|v\|$; $Q_1 \gets q_1$; 
\FOR {$j = 1 \rightarrow k$}
     \STATE $w \gets T(A - \tau I)q_j$;
     \FOR {$i = 1 \rightarrow j$}
        \STATE $h_{i,j} \gets q_i^* w$; $w \gets w - h_{i,j} q_i$; 
     \ENDFOR
      \STATE $h_{j+1,j} \gets \|w\|$; $q_{j+1} \gets w/h_{j+1,j}$; $Q_{j+1} \gets [Q_j, \ q_{j+1}]$;  
\ENDFOR
\end{algorithmic}
\end{minipage}
\end{center}
\end{smaller}
  \caption{The Arnoldi procedure for $T(A - \tau I)$}
  \label{alg:arnoldi}
\end{algorithm}

Given $q_1 = v/\|v\|$, Algorithm~\ref{alg:arnoldi} produces an 
orthonormal Arnoldi basis $Q_{k+1}$ and an extended upper Hessenberg matrix
\begin{equation}
\label{eq:Hk}
H_{k+1,k}  =  \left [ \begin{array}{cccc}
h_{1,1}  & h_{1,2}  & \ldots & h_{1,k} \\
h_{2,1}   & h_{2,2} & \ddots & h_{2,k} \\
          & \ddots   & \ddots & \vdots  \\
          &                        & h_{k,k-1}   & h_{k,k} \\
          &                        &             & h_{k+1,k} \\
\end{array} \right ] \in \mathbf{R}^{(k+1) \times k},
\end{equation}
such that 
$T(A-\tau I) Q_k = Q_{k+1} H_{k+1,k}$, $Q_{k+1}^*Q_{k+1} = I$.
An Arnoldi quadrature rule for the integral~\eqref{eq:int_arnoldi} is fully determined by 
the $k$-by-$k$ leading submatrix $H_k$ of~\eqref{eq:Hk}. Similar to~\eqref{eq:gauss_quad}, it gives
\eq{eq:arnoldi_quad}
v^* h(T(A - \tau I)) v \approx \|v\|^2 e_1^* h(H_k) e_1 \equiv \sum_{i=1}^{k\_}  w_i t_i, \quad w_i = \|v\|^2 z_i(1), \;  t_i = s_1(i),  
\en
where $w_i$ are determined by the first components of the (right) eigenvectors $z_1, \ldots, z_{k{\_}}$ of $H_k$ 
associated with its $k\_$ eigenvalues that have negative real parts, 
and $t_i$ is the $i$th entry of the first column of $S = Z^{-1}$.
Similar to Proposition~\ref{prop:mult},
it can be shown that if $T(A - \tau I)$ has $p$ distinct eigenvalues, then \eqref{eq:arnoldi_quad} is 
exact with at most $p$ nodes.   

Let $H_{k}^{(j)}$ be the upper Hessenberg matrix produced by the Arnoldi process applied to $C = T(A - \tau I)$ with the 
starting vector $v_j$. Then~\eqref{eq:arnoldi_quad} and~\eqref{eq:traceT} yield the 
estimator
\eq{eq:rand_prec_quad_arn}
A_{\tau}(k,m) = \frac{1}{m} \sum_{j=1}^m \sum_{i=1}^{k_{j}}   w^{(j)}_i t^{(j)}_i, 
\; w^{(j)}_i = \|v_j\|^2z^{(j)}_i(1), \;  t^{(j)}_i = s^{(j)}_1(i), \; v_j \in \mathcal{N}(0,I),  
\en
where $z_i^{(j)}(1)$ denotes the first component of the $k_{j}$ unit eigenvectors $z_i^{(j)}$ of $H_{k}^{(j)}$. and $s_i^{(j)}$ is the $i$th entries of the first column of the inverted matrix of eigenvectors of $H_{k}^{(j)}$.  
Similar to~\eqref{eq:rand_prec_quad}, we expect that, for a sufficiently large $m$,  
the real part of $A_\tau(k,m)$ approximates $n_{\_}(A - \tau I)$.
The computation of $\mbox{Re}\left( A_\tau(k,m) \right)$ is described in Algorithm~\ref{alg:parnoldi}.
\begin{algorithm}[!htbp]
\begin{smaller}
\begin{center}
  \begin{minipage}{5in}
\begin{tabular}{p{0.5in}p{4.5in}}
{\bf Input}:  &  \begin{minipage}[t]{4.0in}
                  Matrix $A$, shift $\tau$, HPD preconditioner $T$ for $A - \tau I$, number of  steps $k$, and  parameter $m$.
                  \end{minipage} \\
{\bf Output}:  &  \begin{minipage}[t]{4.0in}
                  approximate number $A_{\tau}$ of eigenvalues of $A$ that are less than $\tau$;
                  \end{minipage}
\end{tabular}
\begin{algorithmic}[1]
\STATE $A_{\tau} \gets 0$.
\FOR {$j = 1 \rightarrow m$}
   \STATE Generate $v \sim \mathcal{N}(0,I)$.
   \STATE Run $k$ steps of Arnoldi process in Algorithm~\ref{alg:arnoldi} with 
   the starting vector $v$ to obtain upper Hessenberg matrix $H_k$.
   \STATE Find the eigendecomposition $(\Theta,Z)$ of $H_{k}$.  
   Let $z_1, \ldots, z_{k\_}$ be  unit eigenvectors associated with  negative eigenvalues.
   \STATE Compute $S  = Z^{-1}$. Set $s \gets S(1,:)$. Set $A_{\tau} \gets A_{\tau} + \|v\|^2 \mbox{Re} \left( \sum_{i = 1}^{k\_} w_i s_i \right)$, where $w_i = z_i(1)$, $s_i = s(i)$.
\ENDFOR
\STATE Return $A_{\tau} \gets \left[ A_{\tau}/m \right ]$.
\end{algorithmic}
\end{minipage}
\end{center}
\end{smaller}
  \caption{The preconditioned Arnoldi-type estimator for $n_{\_}(A - \tau I)$}
  \label{alg:parnoldi}
\end{algorithm}

The cost of Algorithm~\ref{alg:parnoldi} is comparable to that of Algorithm~\ref{alg:planczos}, and is slightly higher
mainly due to the need to invert the eigenvector matrix of $H_k$. 
%
In contrast to Algorithm~\ref{alg:planczos}, the above described scheme assumes complex arithmetic, because the upper Hessenberg matrix $H_k$ is non-Hermitian 
and can have complex eigenpairs. 
However, for good choices of $T$, the imaginary parts tend to be small in practice as, for a sufficiently large $k$, 
the eigenpairs of $H_k$ converge rapidly to those of $T(A-\tau I)$, which are real. Finally, note that the derivation of the 
estimator~\eqref{eq:rand_prec_quad_arn} assumes an extension of the definition of the step function~\eqref{eq:h}, such that 
$h(x)$ has the value of one on the left half of the complex plane, and is zero elsewhere. 

\section{Polynomial viewpoint}\label{sec:poly}

Let $C = M(A - \tau I)M^*$ or $C = T(A - \tau I)$. Then, we can replace $h(C)$ in~\eqref{eq:traceT} by a 
polynomial approximation $p_{l} (C)$ of degree $l$. There are several ways to choose this polynomial. One
option is to take $p_{l} (C)$ as formal truncated expansion of $h(x)$ in the basis of Chebyshev polynomials.
This choice is related the approach described in~\cite{DiNapoli.Polizzi.Saad:2015-TR}.  

The quality of a polynomial approximation $p_l(x)$ of $h(x)$ can be measured
by the difference between $p_l(x)$ and $h(x)$ on the set of eigenvalues of $C$. When the spectrum of $C$ has an arbitrary distribution,
constructing a polynomial that provides the best least squares fit on 
the entire interval containing all eigenvalues, 
as is done in~\cite{DiNapoli.Polizzi.Saad:2015-TR}, is well justified. 

When a good preconditioner is used, the spectrum of 
$C$ tends to cluster around several points on the real line.
Thus, a natural approach would be to choose $p_l$ such that it is only 
close to $h$ in regions that contain  eigenvalue clusters.  It can be 
quite different from $h$ elsewhere. An example of such an approach is 
an interpolating polynomial, e.g.,~\cite{Powell:81},
that interpolates $h$ at eigenvalue clusters. A practical construction of 
such a polynomial is given by the following theorem, which relates the
the interpolation procedure to the Lanczos or Arnoldi process. 



\begin{theorem}[see~\cite{Saad:92, Higham:08}]\label{thm:saad}
Let $Q_{k}$, $T_{k}$ be the orthonormal basis and the projection of
the matrix $C$ generated from  a $k$-step Lanczos (Arnoldi) process,
with the starting vector $v$. 
Then
\begin{equation}\label{eq:lan_interp}
\|v\|Q_{k} f(T_{k}) e_1 = p_{k-1,v}(C) v,
\end{equation} 
where $p_{k-1,v}$ is the unique polynomial of degree at most $k-1$ that interpolates $f$ in the Hermite sense on the spectrum of $T_{k}$. 
\end{theorem}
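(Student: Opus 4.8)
The plan is to verify the identity \eqref{eq:lan_interp} by showing that both sides are equal when we express $f$ on the spectrum of $T_k$ through its Hermite interpolating polynomial, and then exploit the fact that the Lanczos/Arnoldi process produces a Krylov-space projection. First I would recall the defining recurrence: after $k$ steps we have $C Q_k = Q_k T_k + \text{(rank-one residual term)}$, with $q_1 = v/\|v\|$, and more importantly the exactness property that for any polynomial $q$ of degree at most $k-1$ one has $q(C) v = \|v\|\, Q_k\, q(T_k)\, e_1$. This is the Krylov-subspace identity: $Q_k q(T_k) e_1$ lies in the Krylov space $\mathcal{K}_k(C,v)$ and, because $T_k$ is the compression $Q_k^* C Q_k$ and $Q_k$ has orthonormal columns, the action of $q(C)$ on $v$ is faithfully reproduced as long as the degree stays below $k$ so that no component leaks out of the Krylov subspace. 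I would establish this first by induction on the degree of $q$, the base cases $q=1$ and $q(x)=x$ being immediate from $Q_k e_1 = v/\|v\|$ and $C v = \|v\| Q_k T_k e_1$ (valid since $Cv \in \mathcal{K}_k$).

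Next I would introduce $p_{k-1,v}$, the unique polynomial of degree at most $k-1$ that agrees with $f$ in the Hermite sense on the spectrum of $T_k$ — that is, $p_{k-1,v}$ and $f$ have matching values (and matching derivatives up to the appropriate order at any repeated eigenvalue) at each distinct eigenvalue of $T_k$. Uniqueness is the standard fact that $k$ Hermite conditions (counted with multiplicity, and $T_k$ being $k\times k$ has exactly $k$ eigenvalues with multiplicity) determine a polynomial of degree $\le k-1$. By the definition of a matrix function via its values and derivatives on the spectrum (see \cite{Higham:08}), $f(T_k) = p_{k-1,v}(T_k)$ exactly, because $T_k$ is diagonalizable in the Lanczos case and in general the Jordan-block structure of $T_k$ only involves derivatives of $f$ up to the size of the largest block, all of which are matched by the Hermite conditions.

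Combining the two ingredients finishes the argument: $\|v\|\, Q_k\, f(T_k)\, e_1 = \|v\|\, Q_k\, p_{k-1,v}(T_k)\, e_1 = p_{k-1,v}(C)\, v$, where the first equality is the matrix-function identity on $T_k$ and the second is the Krylov exactness identity applied to the polynomial $q = p_{k-1,v}$, which has degree at most $k-1$ as required.

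The main obstacle is the non-Hermitian Arnoldi case, where one must be careful on two counts. First, $H_k$ (the Arnoldi analogue of $T_k$) need not be diagonalizable, so the matrix-function identity $f(H_k) = p_{k-1,v}(H_k)$ has to be justified through the Jordan-form definition of $f(H_k)$, making sure that the Hermite interpolation conditions supply exactly the derivative data the Jordan blocks require — this is precisely why the interpolation is in the Hermite sense rather than merely the Lagrange sense. Second, one should confirm that the Krylov exactness identity $q(C)v = \|v\| Q_k q(H_k) e_1$ still holds for $\deg q \le k-1$; this follows from $C Q_k = Q_k H_k + h_{k+1,k} q_{k+1} e_k^*$ together with the observation that for $q$ of degree at most $k-1$ the contribution of the residual term $q_{k+1} e_k^*$ never activates, but this requires a short induction that tracks which powers of $H_k$ applied to $e_1$ have a nonzero last component. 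Everything else is bookkeeping.
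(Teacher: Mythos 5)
Your argument is correct and is essentially the standard proof of this result: the paper itself does not prove the theorem but cites it from \cite{Saad:92, Higham:08}, and those references argue exactly as you do, combining the Krylov exactness identity $q(C)v = \|v\|\,Q_k\,q(T_k)\,e_1$ for $\deg q \le k-1$ (with the observation that the residual term $\beta_{k+1} q_{k+1} e_k^*$ never activates because $e_k^* T_k^j e_1 = 0$ for $j \le k-2$) with the definition of a matrix function via Hermite interpolation, which gives $f(T_k) = p_{k-1,v}(T_k)$. Your handling of the possibly non-diagonalizable Arnoldi case via the Jordan-form definition is the right refinement and matches the treatment in \cite{Higham:08}.
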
 

The subscript ``$v$'' in $p_{k-1,v}$ is used to emphasize the dependence 
of the polynomial on the staring vector $v$.
Note that $T_k$ is a symmetric tridiagonal matrix if $C$ is Hermitian. It 
is upper Hessenberg otherwise.  

Using formula~\eqref{eq:lan_interp}, it is easy to verify that if $C = M(A - \tau I)M^*$, then the bilinear form 
$v^* p_{k-1,v}(C)v$ is exactly the same as the Gauss quadrature rule on the right-hand side of~\eqref{eq:gauss_quad}. 
Similarly, if $C = T(A - \tau I)$, then $v^* p_{k-1,v}(C)v$ is given by 
the Arnoldi quadrature on the right-hand side of~\eqref{eq:arnoldi_quad}. Hence, both estimators~\eqref{eq:rand_prec_quad}
and \eqref{eq:rand_prec_quad_arn} can be viewed as a stochastic approximation of $\trace \left\{p_{k-1,v} (C)\right\}$,
where $p_{k-1,v}(x)$ is an interpolating polynomial of degree $k-1$ for the step function $h$.

\section{Preconditioning}\label{sec:prec}

The iterative scheme we presented earlier rely on the assumption that 
the operator $T$ is HPD, as this property 
guarantees that the inertia of the original matrix $A - \tau I$ is preserved after preconditioning. Furthermore,
a good choice of $T$ should cluster spectrum of the preconditioned matrix $C$ around
several points in the real axis. 

An ideal HPD preconditioner will result in the preconditioned matrix with only two distinct eigenvalues.  In this case,
by Proposition~\ref{prop:mult}, the Lanczos procedure should terminate
in two steps. 
An example of such an ideal preconditioner is the matrix 
$T = |A - \tau I|^{-1}$, where the absolute value is understood in the 
matrix function sense.  

Clearly, the choice $T = |A-\tau I|^{-1}$ is prohibitively costly in practice. However, it is possible to construct HPD preconditioners
that only approximate $|A - \tau I|^{-1}$. Such a preconditioning strategy was proposed in~\cite{Ve.Kn:13} and is referred to
as the absolute value (AV) preconditioning. 
It was shown in~\cite{Ve.Kn:13} that, e.g., for discrete Laplacian operators, 
AV preconditioners can be efficiently constructed using multigrid (MG). 

Another possible option is to employ the incomplete $LDL^T$ (ILDL) 
factorization. 
Given a matrix $A - \tau I$ and a drop 
tolerance $t$, an ILDL($t$) preconditioner is of the form $T = L^{-*} D^{-1}L^{-1}$, where $L$ is  lower triangular and $D$ is 
block-diagonal with diagonal blocks of size 1 and 2, such that $T \approx (A - \tau I)^{-1}$. 

Clearly, since $A - \tau I$ is indefinite, the ILDL($t$) procedure will generally result in an indefinite 
$T$, which cannot be applied within the preconditioned estimators of this paper. Therefore, we suggest to modify it by 
taking the absolute value of diagonal blocks of $D$, so that $T = L^{-*} |D|^{-1}L^{-1}$. Such a preconditioner is HPD, and the
cost of the proposed modification is marginal. This idea has been motivated by~\cite{Gill.Murray.Ponceleon.Saunders:92}, where a similar approach was used
in the context of full (complete) $LDL^T$ factorization.

Finally, in certain applications, HPD operators are readily available and traditionally used for preconditioning 
indefinite matrices.
For example, this is the case in Density Functional Theory (DFT) based 
electronic structure calculations in which the solutions are expressed
in terms of a linear combination of planewaves. A widely used preconditioner,
often referred to as the Teter preconditioner~\cite{Teter.Payne.Allan:89},
is diagonal in the planewave basis.

\section{Numerical experiments}\label{sec:num}

We now study the  numerical behavior of the proposed methods for three test problems listed in Table~\ref{tab:test}. The matrix ``Laplace'' represents a standard five-point finite differences (FD) discretization of the 2D 
Laplacian on a unit square with mesh size $h = 2^{-7}$. The problems ``Benzene'' and ``H2'' originates from the DFT based electronic 
structure calculations. The former is a FD discretization of a Hamiltonian operator associated with a ground state benzene molecule\footnote{Available 
in the PARSEC group of the University of Florida Sparse Matrix Collection at 
https://www.cise.ufl.edu/research/sparse/matrices/}, whereas the latter corresponds to a Hamiltonian associated with the hydrogen molecule generated by the 
KSSOLV package~\cite{kssolv:09}. Throughout, our goal is to estimate the quantity $n\_(A - \tau I)$
for a given value of the shift $\tau$. 

\begin{table}[htb]
\centering
\begin{tabular}{cccc||ccc}
Problem & $n$ & $\tau$ & $n\_(A - \tau I)$ & Preconditioner & Estimated $n\_(A - \tau I)$ & $k$\tabularnewline
\hline 
\hline 
Laplace & 16,129 & 3,000 & 226  & no prec. & 232 & 134\tabularnewline
 &  &  &  & ILDL(1e-3) & 216 & 34\tabularnewline
 &  &  &  & ILDL(1e-5) & 229 & 6\tabularnewline
\hline 
Benzene & 8,219 & 5 & 344 & no prec. & 338 & 85\tabularnewline
 &  &  &  & ILDL(1e-5) & 350 & 18\tabularnewline
 &  &  &  & ILDL(1e-6) & 341 & 2\tabularnewline
\hline 
H2 & 11,019 & 0.5 & 19 & no prec & 20 & 50\tabularnewline
 &  &  &  & Teter & 20 & 11\tabularnewline
\hline 
\end{tabular}
\caption{{\small Estimates of $n\_(A - \tau I)$ produced by Algorithm~\ref{alg:planczos} with different preconditioners
and the corresponding numbers $k$ of Lanczos iterations for three test problems.}}
\label{tab:test}
\end{table}

Table~\ref{tab:test} presents the results of applying the Lanczos-type estimator given in Algorithm~\ref{alg:planczos} 
to the test problems with different preconditioner choices. For the ``Laplace'' and ``Benzene'' matrices, we use the positive 
definite ILDL$(t)$ based preconditioning with different drop tolerance $t$, discussed in the previous section. 
The ILDL factorizations of $A - \tau I$ are obtained using the \texttt{sym-ildl} package~\cite{sym-ildl}.
In the ``H2'' test,
we employ the diagonal Teter preconditioner available in KSSOLV. 
In both cases, the preconditioner is accessible in the factorized form $T = M^*M$.      
The number of random samples $m$ is set to $50$ in all tests.  

In the table, we report estimates of $n\_(A - \tau I)$  produced by Algorithm~\ref{alg:planczos} along with the corresponding 
numbers of Lanczos iterations ($k$) performed at each sampling step.
The reported values of $k$ correspond to the smallest numbers of Lanczos iterations that result in a sufficiently accurate estimate.
The error associated with these approximations have been observed to 
be within $5\%$.

Table~\ref{tab:test} demonstrates that
the use of preconditioning significantly reduces the number of Lanczos 
iterations.  Furthermore, $k$ becomes smaller as the quality of the 
preconditioner, which is controlled by the drop tolerance $t$ in the ILDL$(t)$ 
based preconditioners, improves for the ``Laplace'' and ``Benzene'' tests. 

\begin{figure}[ht]
\begin{center}%
    \includegraphics[width=5.5cm]{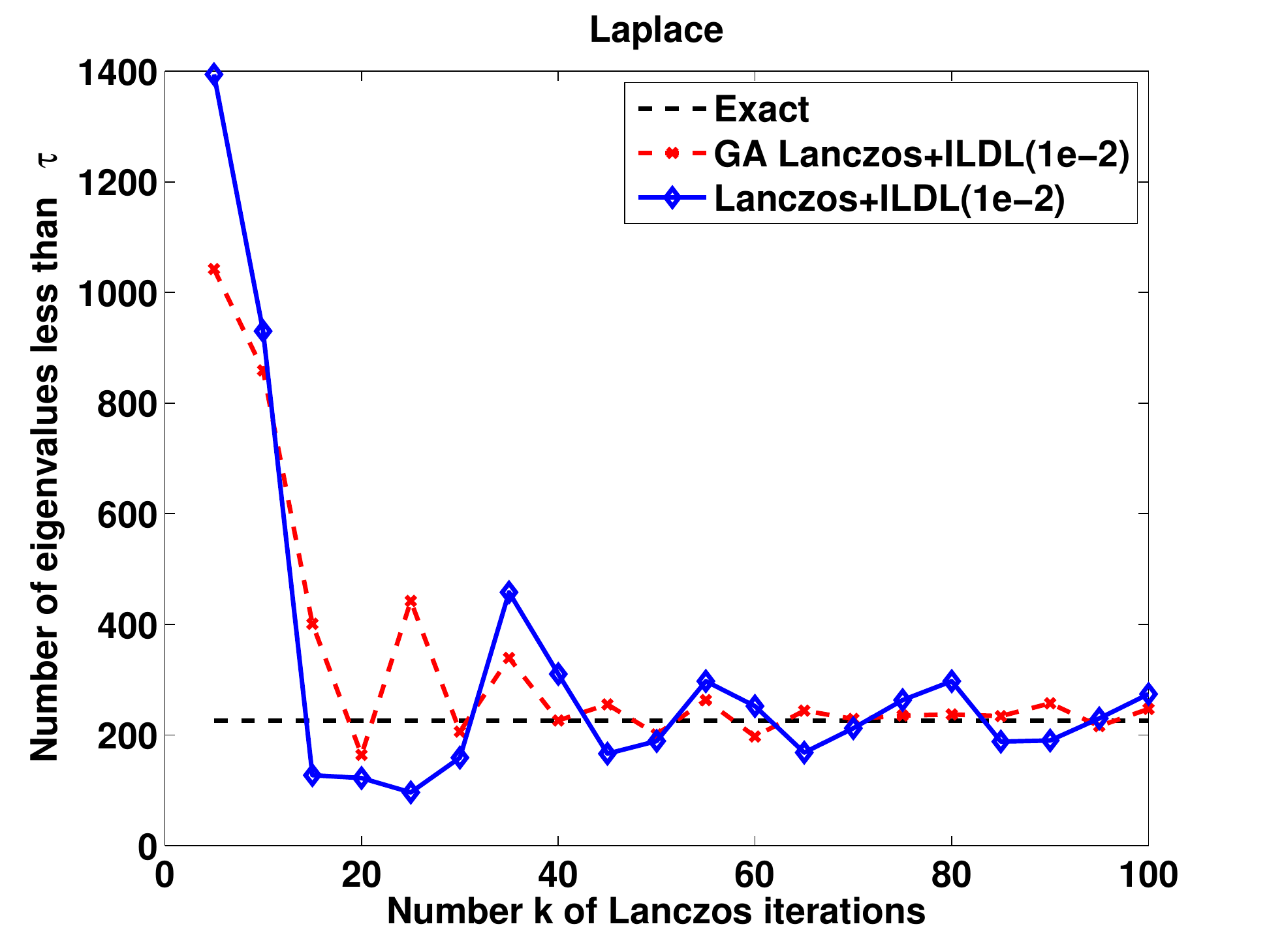}
    \includegraphics[width=5.5cm]{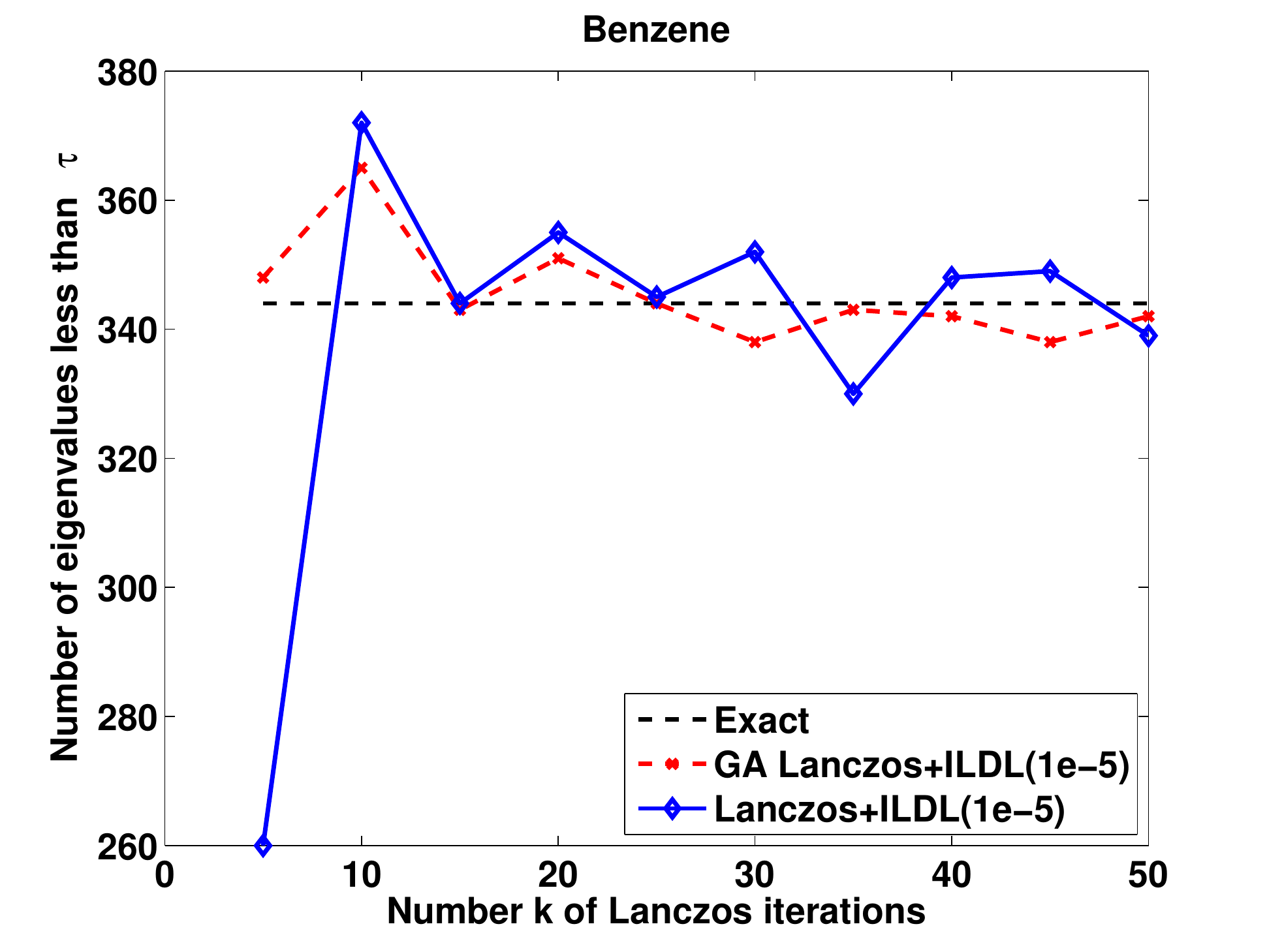}
\end{center}
\caption{
{\small 
Effects of the GA Gauss quadrature of Section~\ref{sec:ga} on the accuracy of the estimator.
}
}
\label{fig:ga}
\end{figure}

Figure~\ref{fig:ga} shows that the quality of the estimates can be further improved by using the GA Gauss quadrature rules
discussed in Section~\ref{sec:ga}. In both plots, the horizontal axis corresponds to the number of Lanczos iterations ($k$) per sampling 
step, and the vertical axis is the corresponding estimate of $n\_(A - \tau I)$. It can be seen that 
the estimator based on the GA Gauss quadrature (referred to as ``GA Lanczos'') is generally more accurate for the two test problems, 
with the accuracy difference being especially evident for smaller values of $k$. 

In the context of linear systems arising from discretizations of partial differential equations, 
an important property of preconditioning is that it allows maintaining the same number of iterations needed to obtain solution
regardless of problem size.
A similar phenomenon can be observed when estimating $n\_(A - \tau I)$ using the 
preconditioned methods of this paper.      

\begin{table}[htb]
\centering
\begin{tabular}{lr}

\begin{tabular}{|c||c|c|c|c|c|}
\hline 
$h$ & $2^{-6}$ & $2^{-7}$ & $2^{-8}$ & $2^{-9}$ & $2^{-10}$\tabularnewline
\hline 
\hline 
Chebyshev & 8 & 14 & 34 & 62 & 80\tabularnewline
\hline 
Arnoldi+AV & 16 & 16 & 18 & 19 & 16\tabularnewline
\hline 
\end{tabular}&
\hspace{1cm}
\begin{tabular}{|c||c|c|c|c|c|}
\hline 
ecut (Ry) & 25 & 50 & 75 & 100 & 125\tabularnewline
\hline 
\hline 
Chebyshev & 52 & 78 & 76 & 99 & 124\tabularnewline
\hline 
Lanczos+Teter & 8 & 8 & 11 & 8 & 8\tabularnewline
\hline 
\end{tabular}

\end{tabular}
\caption{{\small Independence of preconditioned Arnoldi- and Lanczos-type estimators for $n\_(A - \tau I)$ on the discretization parameter for the ``Laplace'' (left)
and ``H2'' (right) problems.}}
\label{tab:meshind}
\end{table}

In Table~\ref{tab:meshind} (left) we consider a family of discrete Laplacians, whose size and condition numbers increase as the 
mesh parameter $h$ is refined. For each of the matrices, we apply the Arnoldi-type estimator of Algorithm~\ref{alg:parnoldi} with the MG AV preconditioner 
from~\cite{Ve.Kn:13} and, similar to above, report the smallest numbers $k$ of Arnoldi iterations per sampling step needed to obtain a sufficiently
accurate estimate (within $5\%$ error) of $n\_(A - \tau I)$. The results are compared against those of an unpreconditioned estimator based 
on~\eqref{eq:traceT}, where $C = A$ and the step function $h(A)$ is replaced by its least-squares polynomial approximation of degree $k$ constructed 
using the basis of Chebyshev polynomials. The latter (referred to as ``Chebyshev'') is essentially the approach proposed in~\cite{DiNapoli.Polizzi.Saad:2015-TR}.  

It can be seen from the table, that Algorithm~\ref{alg:parnoldi} with the AV preconditioner exhibits behavior that is independent of $h$.
Regardless of the problem size and conditioning, the number of Arnoldi steps stays (roughly) the same (between 16 and~19).  

In Table~\ref{tab:meshind} (right) we report a similar test for a sequence of ``H2'' problems obtained by increasing the kinetic energy cutoff 
(ecut) from 25 to 125 Ry in the plane wave discretization. This gives Hamiltonian matrices with sizes ranging from $1,024$ to $23,583$.
Again, we observe that the behavior of the Lanczos-type estimator in Algorithm~\ref{alg:planczos} with the Teter
preconditioner~\cite{Teter.Payne.Allan:89} is essentially independent of the discretization parameter, whereas the ``Chebyshev'' approach tends to 
require higher polynomial degrees as the problem size grows. 
\vspace*{0.2in}

\noindent {\bf Acknowledgement}
Support for this work was provided through Scientific Discovery through Advanced Computing (SciDAC) program funded by U.S. Department of Energy, Office of Science, Advanced Scientific Computing Research.

\bibliographystyle{plain}
\bibliography{local}

\end{document}